\newcommand{\Hom}{\operatorname{Hom}\nolimits}
\renewcommand{\Im}{\operatorname{Im}\nolimits}
\newcommand{\Ker}{\operatorname{Ker}\nolimits}
\newcommand{\Ann}{\operatorname{Ann}\nolimits}
\newcommand{\Ext}{\operatorname{Ext}\nolimits}
\newcommand{\Maxspec}{\operatorname{MaxSpec}\nolimits}
\newcommand{\HH}{\operatorname{HH}\nolimits}
\newcommand{\m}{\operatorname{\mathfrak{m}}\nolimits}
\newcommand{\ra}{\operatorname{\mathfrak{r}}\nolimits}
\newcommand{\La}{\operatorname{\Lambda}\nolimits}
\newcommand{\op}{\operatorname{op}\nolimits}
\newcommand{\V}{\operatorname{V}\nolimits}
\newcommand{\e}{\operatorname{e}\nolimits}
\newcommand{\Char}{\operatorname{char}\nolimits}
\newtheorem{theorem}{Theorem}[section]
\newtheorem{corollary}[theorem]{Corollary}
\newtheorem{lemma}[theorem]{Lemma}
\newtheorem{proposition}[theorem]{Proposition}
\theoremstyle{definition}
\theoremstyle{definition}
\theoremstyle{definition}
\theoremstyle{remark}
\theoremstyle{remark}
\theoremstyle{definition}
\begin{document}
\title{The Avrunin-Scott theorem for quantum complete intersections}
\author{Petter Andreas Bergh \& Karin Erdmann}
\address{Petter Andreas Bergh \\ Institutt for matematiske fag \\
NTNU \\ N-7491 Trondheim \\ Norway} \email{bergh@math.ntnu.no}
\address{Karin Erdmann \\ Mathematical Institute \\ 24-29 St.\ Giles \\ Oxford OX1 3LB \\ United Kingdom}
\email{erdmann@maths.ox.ac.uk}

\thanks{The first author was supported by NFR Storforsk grant no.\
167130}

\subjclass[2000]{16E30, 16E40, 16S80, 16U80, 81R50}

\keywords{Quantum complete intersections, rank varieties, support varieties, Avrunin-Scott theorem}

\maketitle

\begin{abstract}
We prove the Avrunin-Scott theorem for quantum complete intersections; the rank variety of a module is isomorphic to its support variety.
\end{abstract}

\section{Introduction}\label{sec1}

Inspired by the impact of the theories of varieties for modules over group algebras, similar theories have been studied for other classes of algebras. For example, using Hochschild cohomology, Snashall and Solberg developed a theory of support varieties for finite dimensional algebras in \cite{Snashall}. As shown in \cite{Erdmann}, this theory is very powerful when the cohomology of the algebra satisfies sufficient finite generation, and for selfinjective such algebras the theory shares many of the properties of that for group algebras. However, support varieties are difficult to compute. In \cite{Carlson}, Carlson introduced rank varieties for modules over group algebras of elementary abelian groups, varieties defined without using cohomology. Given a module over such an algebra, its rank variety is very explicit and easy to compute. Moreover, Avrunin and Scott proved in \cite{AvruninScott} that the support variety of a module is in fact isomorphic to its rank variety. Needless to say, this result has had important consequences (see, for example, the introduction
in \cite{AvruninScott} or \cite{ErdmannHolloway}).

Motivated by this, the second author and Holloway introduced in \cite{ErdmannHolloway} rank varieties for truncated polynomial algebras in which the generators square to zero. Such algebras also have support varieties, and it was shown that these two varieties are isomorphic.

In this paper, we study rank varieties for quantum complete intersections, a class of algebras originating from work by Manin and Avramov, Gasharov and Peeva (cf.\ \cite{Manin} and \cite{Avramov}). When the defining parameters are roots of unity, then these algebras have support varieties; it was shown in \cite{BerghOppermann} that finite generation of cohomology holds. However, certain quantum complete intersections also have rank varieties, and one would therefore like to know if and how the support and rank varieties are related. We show that they are indeed isomorphic.

\section{Varieties}\label{sec2}

Throughout this section, let $k$ be an algebraically closed field. All modules considered are assumed to be left modules and finitely generated. We start by recalling the definitions and some results on support varieties; details can be found in \cite{Erdmann} and \cite{Snashall}. 

Let $\La$ be a finite dimensional $k$-algebra with Jacobson radical $\ra$. We denote by $\La^{\e}$ the enveloping algebra $\La \otimes_k \La^{\op}$ of $\La$. The $n$th \emph{Hochschild cohomology} group of $\La$, denoted $\HH^n ( \La )$, is the vector space $\Ext_{\La^{\e}}^n ( \La, \La )$ of $n$-fold bimodule extensions of $\La$ with itself. The Yoneda product turns $\HH^* ( \La ) = \oplus_{n=0}^{\infty} \HH^n ( \La )$ into a graded $k$-algebra, the Hochschild cohomology ring of $\La$. This algebra is graded commutative, that is, given two homogeneous elements $\eta, \theta \in \HH^* ( \La )$, the equality $\eta \theta = (-1)^{|\eta||\theta|} \theta \eta$ holds. In particular, the even part $\HH^{2*}( \La ) = \oplus_{n=0}^{\infty} \HH^{2n} ( \La )$ of $\HH^* ( \La )$ is a commutative ring.

Every homogeneous element $\eta \in \HH^* ( \La )$ can be represented by a bimodule extension
$$\eta \colon 0 \to \La \to B_{|\eta|} \to \cdots \to B_1 \to \La \to 0.$$
Given a $\La$-module $M$, the complex
$$\eta \otimes_{\La} M \colon 0 \to M \to B_{|\eta|} \otimes_{\La} M \to \cdots \to B_{|\eta|} \otimes_{\La} M \to M \to 0$$
is exact, hence the tensor product $- \otimes_{\La} M$ induces a homomorphism
$$\HH^* ( \La ) \xrightarrow{\varphi_M} \Ext_{\La}^* (M,M)$$
of graded $k$-algebras. Thus, given two $\La$-modules $M$ and $N$, the graded vector space $\Ext_{\La}^*(M,N)$ becomes a graded module over $\HH^*( \La )$ in two ways; either via $\varphi_M$ or via $\varphi_N$. These two module structures are equal up to a graded sign, that is, given homogeneous elements $\eta \in \HH^* ( \La )$ and $\theta \in \Ext_{\La}^*(M,N)$, the equality $\varphi_N ( \eta ) \circ \theta = (-1)^{|\eta||\theta|} \theta \circ \varphi_M ( \eta )$ holds, where ``$\circ$" denotes the Yoneda product.

Let $H$ be a commutative graded subalgebra of $\HH^* ( \La )$. The \emph{support variety} of an ordered pair $(M,N)$ of $\La$-modules (with respect to $H$), denoted $\V_H(M,N)$, is defined as
$$\V_H(M,N) \stackrel{\text{def}}{=} \{ \m \in \Maxspec H \mid \Ann_H \Ext_{\La}^*(M,N) \subseteq \m \},$$
where $\Maxspec H$ is the set of maximal ideals of $H$. There are equalities 
$$\V_H(M, \La / \ra ) = \V_H(M,M) = \V_H( \La / \ra, M),$$
and this set is defined to be the support variety $V_H(M)$ of $M$. 

In general, support varieties do not contain any homological information on the modules involved. For example, if $\HH^*( \La )$ is finite dimensional, that is, if $\HH^n ( \La )=0$ for $n \gg 0$, then the support variety of any pair of modules is trivial. However, under certain finiteness conditions, the situation is quite different. Suppose  $H$ is Noetherian and $\Ext_{\La}^*(M,N)$ is a finitely generated $H$-module for all $\La$-modules $M$ and $N$ (this is equivalent to $\Ext_{\La}^*( \La / \ra, \La / \ra)$ being finitely generated over $H$). In this case, the dimension of $\V_H(M,N)$ equals the polynomial rate of growth of $\Ext_{\La}^*(M,N)$. In particular, the dimension of the support variety of a module equals its complexity, hence a module
has finite projective dimension if and only if its support variety is trivial. Moreover, the dimension of the support variety of a module is one if and only if its minimal projective resolution is bounded. Under the finite generation hypothesis given, this happens precisely when the module is eventually periodic, that is, when its minimal projective resolution becomes periodic from some step on. 

When the algebra $\La$, in addition to satisfying the finite generation hypothesis, is also selfinjective, then the projective support variety of an indecomposable module is connected. Namely, let $M$ be a $\La$-module whose support variety decomposes as $\V_H(M) = V_1 \cup V_2$, where $V_1$ and $V_2$ are closed homogeneous varieties such that $\V_1 \cap \V_2$ is trivial. Then there are submodules $M_1$ and $M_2$ of $M$, with the property that $M = M_1 \oplus M_2$ and $\V_H(M_i) =V_i$. In particular, this implies that the support variety of an indecomposable periodic module is a single line.

We now turn to the class of algebras for which we will prove the Avrunin-Scott theorem. These are analogues of truncated polynomial rings. Let $c \ge 1$ be an integer, and let ${\bf{q}} = (q_{ij})$ be a $c \times c$ commutation matrix with entries in $k$. That is, the diagonal entries $q_{ii}$ are all $1$, and $q_{ij}q_{ji}=1$ for $i \neq j$. Furthermore, let ${\bf{a}}_c = (a_1, \dots, a_c)$ be an ordered sequence of $c$ integers with $a_i \ge 2$. The \emph{quantum complete intersection} $A_{\bf{q}}^{{\bf{a}}_c}$ determined by these data is the algebra
$$A_{\bf{q}}^{{\bf{a}}_c} \stackrel{\text{def}}{=} k \langle x_1, \dots, x_c \rangle / (x_i^{a_i}, x_ix_j-q_{ij}x_jx_i),$$
which is selfinjective and finite dimensional of dimension $\prod a_i$. It was proved in \cite{BerghOppermann} that if all the commutators $q_{ij}$ are roots of unity, then $\HH^{2*}( A_{\bf{q}}^{{\bf{a}}_c} )$ is Noetherian, and $\Ext_{A_{\bf{q}}^{{\bf{a}}_c}}^*(M,N)$ is a finitely generated $\HH^{2*}( A_{\bf{q}}^{{\bf{a}}_c} )$-module for all $A_{\bf{q}}^{{\bf{a}}_c}$-modules $M$ and $N$. Thus, in this case, the support varieties with respect to $\HH^{2*} (A_{\bf{q}}^{{\bf{a}}_c})$ detect projective and periodic modules, as we saw above. The Krull dimension of $\HH^{2*}( A_{\bf{q}}^{{\bf{a}}_c} )$ is $c$, that is, the number of generators defining the quantum complete intersection. Therefore, the support varieties are homogeneous affine subsets of $k^c$.

Now fix an integer $a \ge 2$, and define $a'$ by
$$a' \stackrel{\text{def}}{=} \left \{ \begin{array}{ll}
a/ \gcd ( a, \Char k ) & \text{if } \Char k >0 \\
a & \text{if } \Char k =0.
\end{array} \right.$$
Let $q \in k$ be a primitive $a'$th root of unity, let ${\bf{q}}$ be the commutation matrix with $q_{ij}=q$ for $i<j$, and let ${\bf{a}}_c$ be the $c$-tuple $(a, \dots, a)$. Then we denote the corresponding quantum complete intersection $A_{\bf{q}}^{{\bf{a}}_c}$ by $A^c_q$, i.e.\
$$A^c_q = k \langle x_1, \dots, x_c \rangle / ( \{ x_i^a \}, \{ x_ix_j-qx_jx_i \}_{i<j} ).$$
Given any $c$-tuple $\lambda = ( \lambda_1, \dots, \lambda_c ) \in k^c$, denote the element $\lambda_1 x_1 + \cdots + \lambda_c x_c \in A^c_q$ by $u_{\lambda}$, and let $k[ u_{\lambda} ]$ be the subalgebra of $A^c_q$ generated by this element. Then $u_{\lambda}^a =0$ by \cite[Lemma 2.3]{Benson}, and the \emph{rank variety} of an $A^c_q$-module $M$, denoted $\V^r_{A^c_q}(M)$, is defined as
$$\V^r_{A^c_q}(M) \stackrel{\text{def}}{=} \{ 0 \} \cup \{ 0 \neq \lambda \in k^c \mid M \text{ is not a projective $k[ u_{\lambda} ]$-module} \}.$$ 
When $\lambda$ is nonzero, then since $u_{\lambda}^a =0$, the subalgebra $k[ u_{\lambda} ]$ is isomorphic to the truncated polynomial ring $k[x]/(x^a)$. Therefore, the requirement that an $A^c_q$-module $M$ is not $k[ u_{\lambda} ]$-projective is equivalent to the requirement that the $k$-linear map $M \xrightarrow{\cdot u_{\lambda}} M$ satisfies
$$\dim \Im ( \cdot u_{\lambda} ) < ((n-1)/n) \dim M.$$
This explains the choice of terminology and shows that, like support varieties, rank varieties are homogeneous affine subsets of $k^c$.

Our aim is to show that the rank variety $\V^r_{A^c_q}(M)$ of an $A^c_q$-module $M$ is isomorphic to its support variety $\V_{\HH^{2*}(A^c_q)}(M)$. We do this by exploiting the structure of the $\Ext$-algebra of the simple $A^c_q$-module $k$. These algebras were determined for all quantum complete intersections in \cite[Theorem 5.3]{BerghOppermann}. For $A^c_q$, it is given by
$$\Ext_{A^c_q}^*(k,k) = k \langle y_1, \dots, y_c,z_1, \dots, z_c \rangle / I,$$
where $I$ is the ideal generated by the relations
$$\left (
\begin{array}{ll}
y_iy_j + q_{ij} y_jy_i & \text{for all } i \neq j \\
y_iz_j - z_jy_i & \text{for all } i,j \\
z_iz_j - z_jz_i & \text{for all } i,j \\
y_i^2-z_i & \text{for all $i$ if } a=2 \\
y_i^2 & \text{for all $i$ if } a \neq 2 
\end{array}
\right ) $$
and where $|y_i| =1$ and $|z_i|=2$. We see that the commutative subalgebra of $\Ext_{A^c_q}^*(k,k)$ generated by $z_1, \dots, z_c$ is the polynomial ring $k[z_1, \dots, z_c]$, and this subalgebra contains the image of the ring homomorphism $\HH^{2*} (A^c_q) \xrightarrow{\varphi_k} \Ext_{A^c_q}^*(k,k)$. By \cite{Oppermann} this image is actually the whole ring $k[z_1, \dots, z_c]$, hence there exists a polynomial subalgebra $H = k[ \eta_1, \dots, \eta_c]$ of $\HH^{2*} (A^c_q)$, with the property that $|\eta_i| =2$ and $\varphi_k ( \eta_i ) =z_i$ for all $i$. 

Let $H$ be such a polynomial subalgebra of $\HH^{2*} (A^c_q)$. Then for all $A^c_q$-modules $M$, the support varieties $\V_H (M)$ and $\V_{\HH^{2*} (A^c_q)}(M)$ are isomorphic. To see this, denote by $R$ the polynomial subalgebra of $\Ext_A^*(k,k)$ generated by the $z_i$. For each $A$-module $M$, the set
$$\{ \m \in \Maxspec R \mid \Ann_R \Ext_A^*(M,k) \subseteq \m \}$$ 
is an affine subset of $\Maxspec R$, namely the \emph{projective relative support variety} of $M$ with respect to $R$ (introduced in \cite{BerghSolberg}). Since $\varphi_k (H) = \varphi_k ( \HH^{2*}(A) ) =R$, it follows from \cite[Proposition 3.4]{BerghSolberg} that the above affine subset of $\Maxspec R$ is isomorphic to both $\V_H (M)$ and $\V_{\HH^{2*} (A)}(M)$. This shows that $\V_H (M)$ and $\V_{\HH^{2*} (A)}(M)$ are isomorphic.

\section{The Avrunin-Scott theorem}\label{sec3}

We keep the same notation as we used in the previous section. That is, throughout this section, we let $k$ be an algebraically closed field, and we assume that all modules are finitely generated and left modules. We fix an integer $a \ge 2$, and define $a'$ by 
$$a' \stackrel{\text{def}}{=} \left \{ \begin{array}{ll}
a/ \gcd ( a, \Char k ) & \text{if } \Char k >0 \\
a & \text{if } \Char k =0 .
\end{array} \right.$$
Moreover, we fix an integer $c \ge 1$ and a primitive $a'$th root of unity $q \in k$, and work with the corresponding quantum complete intersection $A_q^c$ given by
$$A^c_q = k \langle x_1, \dots, x_c \rangle / ( \{ x_i^a \}, \{ x_ix_j-qx_jx_i \}_{i<j} ).$$
To simplify notation, we denote this algebra by $A$. We fix a polynomial subalgebra $H=k[ \eta_1, \dots, \eta_c]$ of $\HH^{2*}(A)$, with $\eta_i \in \HH^{2}(A)$ and $\varphi_k ( \eta_i ) =z_i \in \Ext_A^2(k,k)$. Moreover, we identify the maximal ideals of $H$ with the points of $k^c$. Finally, given a nonzero point $\lambda = ( \lambda_1, \dots, \lambda_c) \in k^c$, we denote the corresponding line in $k^c$ by $\ell_{\lambda}$, and the element $\lambda_1x_1+ \cdots + \lambda_cx_c \in A$ by $u_{\lambda}$.

As we saw in the previous section, the Avrunin-Scott theorem, that is, establishing an isomorphism $\V^r_A(M) \simeq \V_{\HH^{2*}(A)}(M)$ for all $A$-modules $M$, is equivalent to showing that $\V^r_{A}(M)$ is isomorphic to the support variety $\V_H(M)$ with respect to $H$. We do this is in a number of steps, the first of which is the following result. It provides a stable map description of rank varieties.

\begin{proposition}\label{stablemap}
Given a nonzero point $\lambda \in k^c$, the implications
$$\ell_{\lambda} \subseteq \V^r_A (M) \Leftrightarrow \underline{\Hom}_A (Au_{\lambda},M) \neq 0 \Leftrightarrow \underline{\Hom}_A (Au_{\lambda}^{a-1},M) \neq 0$$
hold for every $A$-module $M$.
\end{proposition}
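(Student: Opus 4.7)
The plan is to turn the stable Hom groups into explicit quotients of kernels and images of multiplication by $u_\lambda$ and $u_\lambda^{a-1}$ on $M$, and then to read off nonvanishing from the Jordan decomposition of $M$ over $k[u_\lambda] \cong k[x]/(x^a)$. The key structural input, which I would address first, is that $A$ is free as both a left and a right $k[u_\lambda]$-module (of rank $a^{c-1}$); equivalently, left (or right) multiplication by $u_\lambda$ on $A$ has Jordan type $(a,\dots,a)$. From this one extracts the annihilator identifications: the kernel of right multiplication $\cdot u_\lambda \colon A \to A$ is $Au_\lambda^{a-1}$, the kernel of $\cdot u_\lambda^{a-1}$ is $Au_\lambda$, and symmetrically the left annihilator of $u_\lambda^{a-1}$ in $A$ is $u_\lambda A$. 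Splicing the resulting short exact sequences
$$0 \to Au_\lambda^{a-1} \to A \xrightarrow{\cdot u_\lambda} Au_\lambda \to 0, \qquad 0 \to Au_\lambda \to A \xrightarrow{\cdot u_\lambda^{a-1}} Au_\lambda^{a-1} \to 0$$
yields a $2$-periodic minimal projective resolution of each of $Au_\lambda$ and $Au_\lambda^{a-1}$.

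Next I would compute the stable Hom groups directly. A homomorphism $f \colon Au_\lambda \to M$ is determined by $m = f(u_\lambda) \in M$, subject to $u_\lambda^{a-1} m = 0$ (since $Au_\lambda^{a-1} \cdot u_\lambda = 0$), so $\Hom_A(Au_\lambda, M) \cong \ker(u_\lambda^{a-1}\cdot \colon M \to M)$. Because $A$ is local, every projective is a sum of copies of $A$; unwinding this via the left annihilator identification shows that $f$ factors through a projective precisely when $f(u_\lambda) \in u_\lambda M = \Im(u_\lambda\cdot)$. I therefore obtain
$$\underline{\Hom}_A(Au_\lambda, M) \;\cong\; \ker(u_\lambda^{a-1}\cdot) \,/\, \Im(u_\lambda\cdot),$$
and, by the same argument with the roles of $u_\lambda$ and $u_\lambda^{a-1}$ swapped,
$$\underline{\Hom}_A(Au_\lambda^{a-1}, M) \;\cong\; \ker(u_\lambda\cdot) \,/\, \Im(u_\lambda^{a-1}\cdot).$$

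Finally, I would decompose $M$ as a $k[u_\lambda]$-module into indecomposable summands $k[x]/(x^r)$ with $1 \le r \le a$ and check block by block that a summand of size $r = a$ contributes $0$ to each of the two quotients above, whereas a summand of size $r < a$ contributes $1$ to each. Thus both stable Hom groups have dimension equal to the number of Jordan blocks of $M$ of size strictly less than $a$; this count is zero precisely when $M$ is $k[u_\lambda]$-free, i.e., when $\ell_\lambda \not\subseteq \V^r_A(M)$. This yields both equivalences simultaneously. The main obstacle is the initial freeness of $A$ over $k[u_\lambda]$, essentially the assertion that $u_\lambda$ is a regular element of maximal nilpotency index on $A$; once that is in hand, everything else reduces to linear algebra.
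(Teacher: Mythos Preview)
Your argument is correct and rests on the same structural input as the paper's proof---namely the exactness of the two sequences
\[
0 \to Au_\lambda^{a-1} \to A \xrightarrow{\cdot u_\lambda} Au_\lambda \to 0,\qquad
0 \to Au_\lambda \to A \xrightarrow{\cdot u_\lambda^{a-1}} Au_\lambda^{a-1} \to 0,
\]
which is exactly your freeness assertion. From there the two proofs diverge. The paper applies $\Hom_A(-,M)$ to these sequences, uses the long exact sequence together with the identification $\Ext_A^1(Au_\lambda,M)\cong\underline{\Hom}_A(Au_\lambda^{a-1},M)$, and arrives at the dimension formula
\[
\dim\underline{\Hom}_A(Au_\lambda,M)=\dim\underline{\Hom}_A(Au_\lambda^{a-1},M)=\dim\Ker(u_\lambda\cdot)+\dim\Ker(u_\lambda^{a-1}\cdot)-\dim M,
\]
after which it cites \cite{ErdmannHolloway} for the equivalence of the right-hand side being positive with non-projectivity over $k[u_\lambda]$. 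You instead identify $\underline{\Hom}_A(Au_\lambda,M)$ explicitly as $\ker(u_\lambda^{a-1}\cdot)/\Im(u_\lambda\cdot)$ by analysing which maps factor through free modules, and then read everything off from the Jordan decomposition of $M$ over $k[u_\lambda]$. The two computations agree numerically (your quotient has dimension $\dim\ker(u_\lambda^{a-1}\cdot)-\dim\Im(u_\lambda\cdot)$, which equals the paper's expression by rank--nullity), but your route is more elementary and self-contained: it avoids the $\Ext$/syzygy machinery and replaces the external citation by a direct Jordan-block count. One small wording point: what you need for the ``factors through a projective'' step is that $\ker(u_\lambda^{a-1}\cdot\colon A\to A)=u_\lambda A$, i.e.\ the \emph{right} annihilator of $u_\lambda^{a-1}$ (kernel of left multiplication), which is what left freeness gives; your phrase ``left annihilator of $u_\lambda^{a-1}$'' literally denotes $\{a:au_\lambda^{a-1}=0\}=Au_\lambda$, which is a different submodule.
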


\begin{proof}
Applying $\Hom_A(-,M)$ to the right exact sequence
$$A \xrightarrow{\cdot u_{\lambda}^{a-1}} A \xrightarrow{\cdot u_{\lambda}} Au_{\lambda} \to 0$$
gives the left exact sequence
$$0 \to \Hom_A(Au_{\lambda},M) \to M \xrightarrow{\cdot u_{\lambda}^{a-1}} M$$
of vector spaces. From this sequence we obtain the isomorphism $\Hom_A(Au_{\lambda},M) \simeq \Ker ( M \xrightarrow{\cdot u_{\lambda}^{a-1}} M )$, and interchanging the roles of the two elements $u_{\lambda}$ and $u_{\lambda}^{a-1}$, we obtain the isomorphism $\Hom_A(Au_{\lambda}^{a-1},M) \simeq \Ker ( M \xrightarrow{\cdot u_{\lambda}} M )$. Next, consider the two exact sequences
$$0 \to Au_{\lambda}^{a-1} \hookrightarrow A \xrightarrow{\cdot u_{\lambda}} Au_{\lambda} \to 0$$
$$0 \to Au_{\lambda} \hookrightarrow A \xrightarrow{\cdot u_{\lambda}^{a-1}} Au_{\lambda}^{a-1} \to 0,$$
which show that the modules $Au_{\lambda}^{a-1}$ and $Au_{\lambda}$ are syzygies of each other. Applying $\Hom_A(-,M)$ to the first sequence, we obtain the long exact sequence
$$0 \to \Hom_A(Au_{\lambda},M) \to M \to \Hom_A(Au_{\lambda}^{a-1},M) \to \Ext_A^1(Au_{\lambda},M) \to 0$$
of vector spaces. The isomorphisms we obtained above, together with the isomorphisms $\Ext_A^1(Au_{\lambda},M) \simeq \underline{\Hom}_A ( \Omega_A^1(Au_{\lambda}),M) \simeq \underline{\Hom}_A (Au_{\lambda}^{a-1},M)$, give the equality
$$\dim \Ker ( \cdot u_{\lambda}^{a-1} ) + \dim \Ker ( \cdot u_{\lambda} ) = \dim M + \dim \underline{\Hom}_A (Au_{\lambda}^{a-1},M).$$
Similarly, by applying $\Hom_A(-,M)$ to the other short exact sequence, we obtain the equality
$$\dim \Ker ( \cdot u_{\lambda} ) + \dim \Ker ( \cdot u_{\lambda}^{a-1} ) = \dim M + \dim \underline{\Hom}_A (Au_{\lambda},M).$$
By \cite[Remarks 3.2(ii)]{ErdmannHolloway}, the requirement that $M$ is not a projective $k[u_{\lambda}]$-module is equivalent to the requirement
$$\dim \Ker ( \cdot u_{\lambda} ) + \dim \Ker ( \cdot u_{\lambda}^{a-1} ) > \dim M,$$
and so the result follows.
\end{proof}

Our next aim is to determine the support variety $\V_H ( Au_{\lambda} )$ for all nonzero points $\lambda \in k^c$. We start with the following general result. Recall that an algebra is \emph{Frobenius} if, as a left module over itself, it is isomorphic to its vector space dual.

\begin{lemma}\label{adjoint}
Let $\La$ and $\Gamma$ be two finite dimensional $k$-algebras, with $\La$ selfinjective and $\Gamma$ Frobenius. Furthermore, let $B$ be a $\Gamma$-$\La$-bimodule which is projective both as a $\Gamma$-module and as a $\La$-module.  Then for every $\La$-module $X$ and every $\Gamma$-module $Y$, there is a natural isomorphism
$$\Ext_{\La}^n (X, D(B) \otimes_{\Gamma} Y) \simeq \Ext_{\Gamma}^n(B \otimes_{\La} X, Y)$$
for each $n \ge 0$.
\end{lemma}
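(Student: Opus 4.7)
My strategy is to reduce the claim to the standard Hom--tensor adjunction
$$\Hom_\Gamma(B \otimes_\La P, Y) \cong \Hom_\La(P, \Hom_\Gamma(B, Y)),$$
and then to use the Frobenius hypothesis to rewrite the inner $\Hom$ as a tensor product with $D(B)$. The first piece is to set up projective resolutions: since $B$ is projective as a right $\La$-module, the functor $B \otimes_\La -$ is exact, and since $B \otimes_\La \La \cong B$ is projective as a left $\Gamma$-module by hypothesis, this functor carries $\La$-projectives to $\Gamma$-projectives. Hence for any $\La$-projective resolution $P_\bullet \to X$, the complex $B \otimes_\La P_\bullet \to B \otimes_\La X$ is a $\Gamma$-projective resolution, and the adjunction above yields
$$\Ext_\Gamma^n(B \otimes_\La X, Y) \cong H^n\bigl(\Hom_\La(P_\bullet, \Hom_\Gamma(B, Y))\bigr).$$

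The heart of the argument is then a natural $\La$-module isomorphism $\Hom_\Gamma(B, Y) \cong D(B) \otimes_\Gamma Y$, natural in $Y$. Because $B$ is finitely generated projective as a left $\Gamma$-module, there is the classical iso $\Hom_\Gamma(B, Y) \cong \Hom_\Gamma(B, \Gamma) \otimes_\Gamma Y$, valid over any ring. The Frobenius hypothesis supplies a left $\Gamma$-module isomorphism $\Gamma \cong D(\Gamma)$, which combined with the standard adjunction
$$\Hom_\Gamma(B, D(\Gamma)) = \Hom_\Gamma(B, \Hom_k(\Gamma, k)) \cong \Hom_k(\Gamma \otimes_\Gamma B, k) \cong D(B)$$
yields $\Hom_\Gamma(B, \Gamma) \cong D(B)$. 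Naturality in $B$ ensures that every isomorphism in this chain is left $\La$-linear (the left $\La$-action throughout arises from the right $\La$-action on $B$). Composing the two isos produces the desired natural $\La$-iso $\Hom_\Gamma(B, Y) \cong D(B) \otimes_\Gamma Y$.

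Substituting this into the Ext computation gives
$$\Ext_\Gamma^n(B \otimes_\La X, Y) \cong H^n\bigl(\Hom_\La(P_\bullet, D(B) \otimes_\Gamma Y)\bigr) = \Ext_\La^n(X, D(B) \otimes_\Gamma Y),$$
and naturality in $X$ and $Y$ is inherited from the naturality of each ingredient. The main technical obstacle is verifying that the chain of isomorphisms in the middle step carries the correct right $\Gamma$-module structure on $\Hom_\Gamma(B,\Gamma) \cong D(B)$, so that tensoring with $Y$ over $\Gamma$ produces the stated result. This is automatic when $\Gamma$ is symmetric; for a general Frobenius $\Gamma$ one must check that the chosen Frobenius form is compatible with the intended bimodule conventions, so that no Nakayama twist enters the right $\Gamma$-action used to form the tensor product.
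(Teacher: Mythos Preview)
Your proof is correct and shares its core with the paper's: both establish the functor isomorphism $\Hom_\Gamma(B,-)\cong D(B)\otimes_\Gamma-$ via the chain $\Hom_\Gamma(B,\Gamma)\cong\Hom_\Gamma(B,D(\Gamma))\cong D(B)$, using that $B$ is finitely generated $\Gamma$-projective and that $\Gamma$ is Frobenius. The difference lies in how the passage to $n>0$ is organized. You resolve $X$ projectively and use that $B\otimes_\La-$ is exact and preserves projectives; the paper instead resolves $Y$ injectively and argues that $D(B)\otimes_\Gamma-$ is exact and takes injectives to injectives, which is where the selfinjectivity of $\La$ (and of $\Gamma$) is invoked. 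Your route is slightly more economical, since it does not actually require $\La$ to be selfinjective at all. Your closing remark about the Nakayama twist is well taken: the isomorphism $\Gamma\cong D(\Gamma)$ is only a one-sided module isomorphism for a non-symmetric Frobenius algebra, so the right $\Gamma$-structures on $\Hom_\Gamma(B,\Gamma)$ and $D(B)$ may differ by the Nakayama automorphism; the paper's proof passes over this point without comment, so you have been at least as careful as the original.
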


\begin{proof}
Adjointness gives a natural isomorphism
$$\Hom_{\La} (X, \Hom_{\Gamma}(B,Y)) \simeq \Hom_{\Gamma}(B \otimes_{\La} X, Y)$$
for every $\La$-module $X$ and every $\Gamma$-module $Y$. The functor $\Hom_{\Gamma}(B,-)$ is isomorphic to $\Hom_{\Gamma}(B, \Gamma \otimes_{\Gamma} -)$, which in turn is isomorphic to $\Hom_{\Gamma}(B, \Gamma ) \otimes_{\Gamma} -$ since $B$ is a projective $\Gamma$-module. Now $\Gamma$, being Frobenius, is isomorphic as a left $\Gamma$-module to $D ( \Gamma )$, and so adjointness gives
$$\Hom_{\Gamma}(B, \Gamma ) \simeq \Hom_{\Gamma}(B, D ( \Gamma ) ) \simeq D( \Gamma \otimes_{\Gamma} B ) \simeq D(B).$$
Therefore, the functor $\Hom_{\Gamma}(B,-)$ is isomorphic to $D(B) \otimes_{\Gamma}-$, proving the case $n=0$. For the case $n>0$, note that since $B$ is a projective $\La$-module, the $\La$-module $D(B) \otimes_{\Gamma} P$ is projective for every projective $\Gamma$-module $P$. However, since both $\La$ and $\Gamma$ are selfinjective, this means that $D(B) \otimes_{\Gamma} I$ is an injective $\La$-module for every injective $\Gamma$-module $I$. Moreover, given a $\Gamma$-module $Y$ with an injective resolution $\bf{I}$, the complex $D(B) \otimes_{\Gamma} \bf{I}$ is an injective resolution of the $\La$-module $D(B) \otimes_{\Gamma} Y$, since $D(B)$ is a projective right $\Gamma$-module. Therefore the isomorphisms
\begin{eqnarray*}
\Ext_{\Gamma}^n(B \otimes_{\La} X, Y) & \simeq & H^n \left ( \Hom_{\Gamma}(B \otimes_{\La} X, \bf{I} ) \right ) \\
& \simeq & H^n \left ( \Hom_{\La} (X, D(B) \otimes_{\Gamma} \bf{I} ) \right ) \\
& \simeq & \Ext_{\La}^n (X, D(B) \otimes_{\Gamma} Y)
\end{eqnarray*}
hold for every $\La$-module $X$. 
\end{proof}

In order to determine the support variety of $Au_{\lambda}$, we exploit some nice properties of certain bimodules arising from elements of the Hochschild cohomology ring. Let $\zeta$ be a homogeneous element of $\HH^* (A)$, represented by a map $\Omega_{A^{\e}}^{|\zeta|} (A) \xrightarrow{f_{\zeta}} A$, say. Then $\zeta$ corresponds to the bottom short exact sequence in the exact commutative pushout diagram
$$\xymatrix{0 \ar[r] & \Omega_{A^{\e}}^{|\zeta|} (A) \ar[d]^{f_{\zeta}} \ar[r] & P_{|\zeta|-1} \ar[d] \ar[r] & \Omega_{A^{\e}}^{|\zeta|-1} (A) \ar@{=}[d] \ar[r] & 0 \\
0 \ar[r] & A \ar[r] & K_{\zeta} \ar[r] & \Omega_{A^{\e}}^{|\zeta|-1} (A) \ar[r] & 0}$$
where $P_{|\zeta|-1}$ is the projective cover of $\Omega_{A^{\e}}^{|\zeta|-1} (A)$. If $\zeta$ is an element of $H$ and $M$ is an $A$-module, then by \cite[Proposition 4.3]{Erdmann} the support variety of the $A$-module $K_{\zeta} \otimes_A M$ is given by
$$\V_H (K_{\zeta} \otimes_A M) = \V_H ( \zeta ) \cap \V_H (M),$$
where $\V_H ( \zeta ) = \{ \alpha \in k^c \mid \zeta ( \alpha )=0 \}$. In the following result, we use the pushout bimodule $K_{\zeta}$ to give a criterion for when two lines in $k^c$ are perpendicular. Given a nonzero point $\mu \in k^c$, we denote the set $\{ \alpha \in k^c \mid \sum \alpha_i \mu_i =0 \}$ by $\ell_{\mu}^{\perp}$; this is the hyperplane perpendicular to the line $\ell_{\mu}$. 

\begin{lemma}\label{perpendicular}
Let $M$ be a periodic $A$-module of period $1$, and suppose $\V_H(M)$ is a single line $\ell_{\alpha}$, where $\alpha \in k^c$ is a nonzero point. Then, given any nonzero point $\mu \in k^c$, the implication
$$\underline{\Hom}_A(M, K_{\zeta} \otimes_A k) \neq 0 \Rightarrow \ell_{\alpha} \subseteq \ell_{\mu}^{\perp}$$
holds, where $\zeta = \sum \mu_i \eta_i \in H^2$.
\end{lemma}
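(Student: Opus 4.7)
The plan is to exploit that $\zeta = \sum \mu_i \eta_i$ is linear in the polynomial generators of $H = k[\eta_1, \dots, \eta_c]$, so under the identification $\Maxspec H \simeq k^c$ one has $\V_H(\zeta) = \{ \beta \in k^c \mid \sum \mu_i \beta_i = 0 \} = \ell_\mu^\perp$. Combined with the pushout formula $\V_H(K_\zeta \otimes_A k) = \V_H(\zeta) \cap \V_H(k)$ recalled just before the statement and the fact that $\V_H(k) = k^c$ (because $\varphi_k$ maps $H$ isomorphically onto the polynomial subalgebra $R$ of $\Ext_A^*(k,k)$, so $\Ann_H \Ext_A^*(k,k) = 0$), this identifies
\[
\V_H(K_\zeta \otimes_A k) = \ell_\mu^\perp .
\]

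Next the periodicity hypothesis will be used to promote the stable Hom nonvanishing into infinite-dimensionality of Ext. Because $M$ has period $1$, we have $\Omega M \simeq M$ in the stable module category, and the standard identification $\Ext_A^n(M,N) \simeq \underline{\Hom}_A(\Omega^n M, N)$ (valid for $n \geq 1$ over the selfinjective algebra $A$) gives
\[
\Ext_A^n(M, K_\zeta \otimes_A k) \simeq \underline{\Hom}_A(M, K_\zeta \otimes_A k) \neq 0
\]
for every $n \geq 1$. Hence $\Ext_A^*(M, K_\zeta \otimes_A k)$ is infinite-dimensional over $k$. Since by \cite{BerghOppermann} it is a finitely generated module over the Noetherian ring $H$, its support $\V_H(M, K_\zeta \otimes_A k)$ cannot reduce to the origin of $k^c$: otherwise $\Ann_H \Ext_A^*(M, K_\zeta \otimes_A k)$ would contain a power of the irrelevant ideal and the Ext would be finite-dimensional.

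To finish, invoke the standard containment
\[
\V_H(M, K_\zeta \otimes_A k) \subseteq \V_H(M) \cap \V_H(K_\zeta \otimes_A k) = \ell_\alpha \cap \ell_\mu^\perp ,
\]
which holds because the $H$-action on $\Ext_A^*(M,N)$ factors through $\Ext_A^*(M,M)$ (and through $\Ext_A^*(N,N)$). The previous step produces a nonzero point in the left hand side, so the hyperplane $\ell_\mu^\perp$ meets the line $\ell_\alpha$ in more than the origin; since $\ell_\mu^\perp$ is a linear subspace and $\ell_\alpha$ a line through $0$, this forces $\ell_\alpha \subseteq \ell_\mu^\perp$. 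The only real obstacle is the middle step, where periodicity must be leveraged to rule out the support collapsing to a point; once this is done, the conclusion is a formal matter of combining known support variety properties with a trivial piece of linear geometry.
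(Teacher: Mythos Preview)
Your argument is correct, but it differs from the paper's route in an interesting way. The paper never uses the period-$1$ hypothesis directly: instead it invokes Lemma~\ref{adjoint} (the Frobenius adjunction for the biprojective bimodule $K_\zeta$) to identify $\underline{\Hom}_A(M, K_\zeta \otimes_A k)$ with $\underline{\Hom}_A(D(K_\zeta)\otimes_A M, k)$, so that nonvanishing of the stable Hom is equivalent to $D(K_\zeta)\otimes_A M$ being non-projective, i.e.\ to $\V_H(D(K_\zeta)\otimes_A M)\neq 0$. A second application of Lemma~\ref{adjoint} then gives $\V_H(D(K_\zeta)\otimes_A M)=\V_H(M,K_\zeta\otimes_A k)$, and the proof finishes with the same containment $\V_H(M,K_\zeta\otimes_A k)\subseteq \ell_\alpha\cap\ell_\mu^\perp$ that you use. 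You bypass Lemma~\ref{adjoint} entirely: by exploiting $\Omega M\simeq M$ you promote the single stable Hom nonvanishing to $\Ext_A^n(M,K_\zeta\otimes_A k)\neq 0$ for all $n\geq 1$, and then finite generation over $H$ forces $\V_H(M,K_\zeta\otimes_A k)$ to be positive-dimensional. Your approach is more elementary (no bimodule adjunction needed) but leans essentially on the period-$1$ assumption; the paper's approach costs the auxiliary Lemma~\ref{adjoint} but in fact proves the implication for any $M$ whose support variety is a single line, whether or not it has period $1$. For the intended application to $T=Au_\lambda\oplus Au_\lambda^{a-1}$ in Proposition~\ref{periodicvariety}, both hypotheses hold, so either route suffices.
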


\begin{proof}
Since the bimodule $K_{\zeta}$ is projective both as a left and as a right $A$-module, we see from the previous lemma that $\underline{\Hom}_A(M, K_{\zeta} \otimes_A k)$ is naturally isomorphic to $\underline{\Hom}_A(D(K_{\zeta}) \otimes_A M, k)$. Thus $\underline{\Hom}_A(M, K_{\zeta} \otimes_A k)$ is nonzero if and only if the same holds for $\underline{\Hom}_A(D(K_{\zeta}) \otimes_A M, k)$, and this is equivalent to $D(K_{\zeta}) \otimes_A M$ not being a projective $A$-module. The latter happens if and only if $\V_H (D(K_{\zeta}) \otimes_A M) \neq 0$.

Using the previous lemma once more, we see that the $H$-modules $\Ext_A^* (D(K_{\zeta}) \otimes_A M,k)$ and $\Ext_A^* (M, K_{\zeta} \otimes_A k)$ are isomorphic. Therefore
\begin{eqnarray*}
\V_H (D(K_{\zeta}) \otimes_A M) & = & \V_H (M, K_{\zeta} \otimes_A k) \\
& \subseteq & \V_H(M) \cap \V_H(K_{\zeta} \otimes_A k) \\
& = & \V_H(M) \cap \V_H ( \zeta ) \\
& = & \ell_{\alpha} \cap \ell_{\mu}^{\perp},
\end{eqnarray*}
and so if $\underline{\Hom}_A(M, K_{\zeta} \otimes_A k)$ is nonzero then $\ell_{\alpha} \subseteq \ell_{\mu}^{\perp}$.
\end{proof}

Next, define a map $k^c \xrightarrow{F} k^c$ of affine spaces by
$$( \alpha_1, \dots, \alpha_c ) \mapsto ( \alpha_1^a, \dots, \alpha_c^a ).$$
Our aim now is to show that $\V_H (Au_{\lambda}) = \ell_{F( \lambda )}$ for every nonzero point $\lambda \in k^c$. In order to prove this, we need the following lemma.

\begin{lemma}\label{monomorphism}
Let $\mu$ and $\lambda$ be nonzero points in $k^c$ with $\ell_{\mu} \subseteq \ell_{F( \lambda )}^{\perp}$, and denote the element $\sum \mu_i \eta_i$ in $H$ by $\zeta$. Then there exists a monomorphism 
$$Au_{\lambda} \to K_{\zeta} \otimes_A k,$$
and consequently $\underline{\Hom}_A(Au_{\lambda}, K_{\zeta} \otimes_A k)$ is nonzero.
\end{lemma}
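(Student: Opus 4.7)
The plan is to construct $\phi$ by lifting $u_\lambda$ through the Carlson-type extension
$$0 \to k \to K_\zeta \otimes_A k \to \Omega_A^1(k) \to 0$$
obtained by tensoring the bottom row of the pushout diagram defining $K_\zeta$ with $k$ over $A$. Here I use the identifications $\Omega_{A^{\e}}^n(A) \otimes_A k \simeq \Omega_A^n(k)$ for $n \le 2$, valid because in the selfinjective setting the bimodule syzygies are projective as one-sided modules; the tensored sequence represents the class $\varphi_k(\zeta) = \sum_i \mu_i z_i \in \Ext_A^2(k,k)$. Since $A$ is local, $\Omega_A^1(k) = \ra$, so $u_\lambda \in \Omega_A^1(k)$ has some preimage $v \in K_\zeta \otimes_A k$. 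Because $u_\lambda^a = 0$ by \cite[Lemma~2.3]{Benson}, the element $u_\lambda^{a-1}v$ projects to $0$ in $\ra$, hence lies in the submodule $k$; write $u_\lambda^{a-1}v = c \cdot e$ where $e$ generates this copy of $k$.

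The key step is the identity $c = \sum_i \mu_i \lambda_i^a$, whence the hypothesis $\ell_\mu \subseteq \ell_{F(\lambda)}^{\perp}$ forces $u_\lambda^{a-1}v = 0$. To compute $c$, I realize the pushout as $(k \oplus P_1)/\{(g(\omega),-\omega) : \omega \in \Omega_A^2(k)\}$, where $P_\bullet \to k$ is the minimal projective resolution of $k$ with $P_1 = A^c$ and $d_1(e_i) = x_i$, and $g \colon \Omega_A^2(k) \to k$ is a cocycle representing $\varphi_k(\zeta)$. Taking $v = (0, \sum_i \lambda_i e_i)$, the pushout relation gives $u_\lambda^{a-1}v = (g(\omega),0)$ with $\omega = \sum_i \lambda_i u_\lambda^{a-1} e_i \in \Omega_A^2(k)$. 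By the computation of $\Ext_A^*(k,k)$ recalled in Section \ref{sec2}, $P_2$ is generated by symbols $\tilde z_i$ and $\tilde y_i \tilde y_j$ ($i<j$) corresponding to the defining relations $x_i^a$ and $x_ix_j - qx_jx_i$, with $d_2(\tilde z_i) = x_i^{a-1} e_i$ and $d_2(\tilde y_i \tilde y_j) = x_i e_j - q x_j e_i$, and $z_i$ is the cocycle dual to $\tilde z_i$. A direct calculation with the quantum multinomial expansion of $u_\lambda^{a-1}$ shows that the monomial $x_i^{a-1}$ in the $e_i$-component of $\omega$ has coefficient $\lambda_i \cdot \lambda_i^{a-1} = \lambda_i^a$, and that this coefficient equals the $\tilde z_i$-component of $\omega$ in $\Omega_A^2(k)/\ra\Omega_A^2(k)$ (since neither $c_{ji} x_j$ for $j<i$ nor $c_{ik} x_k$ for $i<k$ can produce the pure monomial $x_i^{a-1}$ in normal form). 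Hence $z_i(\omega) = \lambda_i^a$ and $c = g(\omega) = \sum_i \mu_i \lambda_i^a = 0$.

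With $u_\lambda^{a-1}v = 0$, the map $A \to K_\zeta \otimes_A k$ sending $1 \mapsto v$ kills the left ideal $Au_\lambda^{a-1}$ and factors through the isomorphism $A/Au_\lambda^{a-1} \simeq Au_\lambda$, $a + Au_\lambda^{a-1} \mapsto au_\lambda$. The resulting map $\phi\colon Au_\lambda \to K_\zeta \otimes_A k$, $bu_\lambda \mapsto bv$, is injective because its composition with the surjection $K_\zeta \otimes_A k \twoheadrightarrow \ra$ is the inclusion $Au_\lambda \hookrightarrow \ra$. Finally, $\phi$ is nonzero in $\underline{\Hom}_A(Au_\lambda, K_\zeta \otimes_A k)$: if $\phi$ factored through a projective $A$-module, then so would the inclusion $Au_\lambda \hookrightarrow \ra$, but any factorization $Au_\lambda \to A^n \to \ra$ sends $u_\lambda$ into $u_\lambda \ra$, so by Nakayama's lemma it cannot equal the inclusion.

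The main obstacle is the explicit calculation of $c$: it relies both on the defining identity $\varphi_k(\eta_i) = z_i$ and on a sufficiently concrete description of the minimal projective resolution of $k$ over the quantum complete intersection $A$.
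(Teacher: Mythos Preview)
Your argument is correct and follows essentially the same strategy as the paper's proof: lift $u_\lambda \in \Omega_A^1(k)$ into the Carlson module $K_\zeta \otimes_A k$ and verify that the obstruction to the lift giving a well-defined map out of $Au_\lambda \cong A/Au_\lambda^{a-1}$ is precisely $\sum_i \mu_i\lambda_i^a$, which vanishes by hypothesis. The only real difference is packaging. The paper realizes $K_\zeta \otimes_A k$ as the \emph{pullback} of $A \xrightarrow{\pi} \Omega_A^{-1}(k) \xleftarrow{\Omega_A^{-1}(\sum \mu_i f_{z_i})} \Omega_A^1(k)$, writes down explicit lifts $g_i \colon \Omega_A^1(k) \to A$, and checks directly that $u_\lambda^{a-1}\big(\sum_i \mu_i g_i(u_\lambda),\, -u_\lambda\big)=0$. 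You instead use the dual \emph{pushout} description $(k \oplus A^c)/\!\!\sim$ and read off the obstruction as the value of the cocycle $\sum_i \mu_i z_i$ on $\omega = \sum_j \lambda_j u_\lambda^{a-1}e_j$; your observation that only the $\tilde z_i$-generator can contribute the pure monomial $x_i^{a-1}$ in the $e_i$-coordinate is exactly what makes $z_i(\omega)=\lambda_i^a$. These are two sides of the same computation.

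One point worth noting: you go beyond the paper by actually arguing that the monomorphism is stably nonzero (the paper states this as a ``consequently'' with no justification). Your argument is fine, though the step ``sends $u_\lambda$ into $u_\lambda\ra$'' implicitly uses that the right annihilator of $u_\lambda^{a-1}$ in $A$ equals $u_\lambda A$, i.e.\ that $A$ is free as a right $k[u_\lambda]$-module; this is part of the Benson--Erdmann--Holloway framework underlying the rank-variety definition, so it is legitimate to invoke here. (A slightly lighter variant: from $u_\lambda^{a-1}h(u_\lambda)=0$ and the grading one gets $h(u_\lambda)\in\ra^n$, so the composite lands in $\ra^2$, whereas $u_\lambda\notin\ra^2$.)
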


\begin{proof}
Note that we have implicitly assumed that $c$ is at least $2$. Denote the radical of $A$ by $\ra$. The second syzygy $\Omega_A^2(k)$ is the kernel of the projective cover $A^c \to \ra$, which maps the $i$th generator $\varepsilon_i$ of the free $A$-module $A^c$ to $x_i$. The generators of $\Omega_A^2(k)$ are the two sets
$$\{ x_i^{a-1} \varepsilon_i \}_{i=1}^c, \hspace{3mm} \{ qx_j \varepsilon_i - x_i \varepsilon_j \}_{i<j},$$
and the element $z_i \in \Ext_A^2(k,k)$ is represented by the homomorphism $\Omega_A^2(k) \xrightarrow{f_{z_i}} k$ mapping $x_i^{a-1} \varepsilon_i$ to $1$ and all the other generators to zero. In the diagram
$$\xymatrix{
0 \ar[r] & \Omega_A^2(k) \ar[d]^{f_{z_i}} \ar[r] & A^c \ar@{..>}[d]^{f_ i} \ar[r] & \Omega_A^1(k) \ar@{..>}[d]^{\Omega_A^{-1}(f_{z_i})} \ar[r] & 0 \\
0 \ar[r] & k \ar[r]^{\cdot ( \prod x_i^{a-1})} & A \ar[r]^<<<<<{\pi} & \Omega_A^{-1}(k) \ar[r] & 0}$$
the map $f_i$, given by
$$e_j \mapsto \left \{
\begin{array}{ll}
q^{i-1} \prod_{n \neq i} x_n^{a-1} & \text{when } j=i \\
0 & \text{when } j \neq i,
\end{array}
\right.$$
provides the first step in a lifting of $f_{z_i}$. Thus the map $\Omega_A^{-1}(f_{z_i})$ factorizes as $\Omega_A^{-1}(f_{z_i}) = \pi g_i$, where $A \xrightarrow{\pi} \Omega_A^{-1}(k)$ is the natural quotient map and $\Omega_A^1(k) \xrightarrow{g_i} A$ is the map given by
$$x_j \mapsto \left \{
\begin{array}{ll}
q^{i-1} \prod_{n \neq i} x_n^{a-1} & \text{when } j=i \\
0 & \text{when } j \neq i.
\end{array}
\right.$$

Now we use the fact that $K_{\zeta} \otimes_A k$ is the pullback of
$$\xymatrix{
& \Omega_A^1(k) \ar[d]^{\Omega_A^{-1}( \sum \mu_i f_{z_i})} \\
A \ar[r]^<<<<<{\pi} & \Omega_A^{-1}(k)}$$
to obtain a monomorphism $Au_{\lambda} \to K_{\zeta} \otimes_A k$. We do this by constructing a monomorphism $Au_{\lambda} \xrightarrow{f} A \oplus \Omega_A^1(k)$ whose composition with $A \oplus \Omega_A^1(k) \xrightarrow{( \pi, \Omega_A^{-1}( \sum \mu_i f_{z_i}))} \Omega_A^{-1}(k)$ is zero. Namely, define $f$ by
$$u_{\lambda} \mapsto ( \sum_{i=1}^c \mu_i g_i ( u_{\lambda} ), -u_{\lambda} ).$$
To show that this map is well defined, we need to check that $f ( u_{\lambda}^a )=0$, that is, that the element $u_{\lambda}^{a-1} f ( u_{\lambda} )$ is zero. Computing directly, we see that
\begin{eqnarray*}
u_{\lambda}^{a-1} \sum_{i=1}^c \mu_i g_i ( u_{\lambda} ) & = & u_{\lambda}^{a-1} \sum_{i=1}^c \left ( \mu_i \lambda_i q^{i-1} \prod_{n \neq i} x_n^{a-1} \right ) \\
& = & \sum_{i=1}^c \lambda_i^{a-1} x_i^{a-1} \left ( \mu_i \lambda_i q^{i-1} \prod_{n \neq i} x_n^{a-1} \right ) \\
& = & \left ( \sum_{i=1}^c \mu_i \lambda_i^a \right ) \prod_{j=1}^c x_j^{a-1} \\
& = & 0
\end{eqnarray*}
since $\ell_{\mu} \subseteq \ell_{F( \lambda )}^{\perp}$. Thus the map $f$ is well defined, and it is obviously a monomorphism.
\end{proof}

We are now ready to prove that $\V_H (Au_{\lambda}) = \ell_{F( \lambda )}$ for every nonzero point $\lambda \in k^c$.

\begin{proposition}\label{periodicvariety}
If $\lambda \in k^c$ is nonzero, then $\V_H (Au_{\lambda}) = \ell_{F( \lambda )}$.
\end{proposition}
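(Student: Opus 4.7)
The plan is to first show that $\V_H(Au_\lambda)$ is a single line, and then to identify this line with $\ell_{F(\lambda)}$ by combining Lemmas~\ref{perpendicular} and~\ref{monomorphism}. I assume $c \ge 2$, since when $c=1$ every nonzero line in $k^c$ is $k$ itself and the assertion is immediate.

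First I would argue that $Au_\lambda$ is both indecomposable and periodic. The module $Au_\lambda$ is cyclic, hence isomorphic to $A/\Ann_A(u_\lambda)$; as $u_\lambda \ne 0$ the annihilator is a proper ideal, and since $A$ is local this ideal lies in the radical $\ra$. Thus $A/\Ann_A(u_\lambda)$ is a local module and therefore indecomposable. Periodicity is read off the two short exact sequences appearing in the proof of Proposition~\ref{stablemap}: they show that $\Omega_A(Au_\lambda) \cong Au_\lambda^{a-1}$ and $\Omega_A(Au_\lambda^{a-1}) \cong Au_\lambda$, giving $\Omega_A^2(Au_\lambda) \cong Au_\lambda$. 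By the general fact recalled in Section~\ref{sec2} that an indecomposable periodic module over a selfinjective algebra satisfying the finite generation hypothesis has support variety a single line, we may therefore write $\V_H(Au_\lambda) = \ell_\alpha$ for some nonzero $\alpha \in k^c$.

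To identify $\ell_\alpha$ with $\ell_{F(\lambda)}$, I would form the auxiliary module $M = Au_\lambda \oplus Au_\lambda^{a-1}$. The syzygy computation above gives $\Omega_A M \cong M$, so $M$ has period $1$, and since syzygies share support varieties we have $\V_H(M) = \V_H(Au_\lambda) = \ell_\alpha$. Now fix any nonzero $\mu \in \ell_{F(\lambda)}^\perp$ and set $\zeta = \sum_i \mu_i \eta_i \in H^2$. Lemma~\ref{monomorphism} produces a monomorphism $Au_\lambda \hookrightarrow K_\zeta \otimes_A k$, so $\underline{\Hom}_A(Au_\lambda, K_\zeta \otimes_A k) \ne 0$, and consequently $\underline{\Hom}_A(M, K_\zeta \otimes_A k) \ne 0$. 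Lemma~\ref{perpendicular} then forces $\ell_\alpha \subseteq \ell_\mu^\perp$. Letting $\mu$ range over all nonzero points of $\ell_{F(\lambda)}^\perp$ and intersecting, I obtain $\ell_\alpha \subseteq (\ell_{F(\lambda)}^\perp)^\perp = \ell_{F(\lambda)}$, and as both sides are lines through the origin equality holds.

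The main obstacle is the mismatch in periods: $Au_\lambda$ has period at most $2$, but Lemma~\ref{perpendicular} is stated for modules of period $1$. Passing to the self-syzygy direct sum $Au_\lambda \oplus Au_\lambda^{a-1}$, which has the same support variety as $Au_\lambda$, is the natural way to bridge this gap and feed the input we need into Lemma~\ref{perpendicular}.
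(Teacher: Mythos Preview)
Your proof is correct and follows essentially the same route as the paper's: both use the direct sum $Au_\lambda \oplus Au_\lambda^{a-1}$ to obtain a period-$1$ module with the same support variety, apply Lemma~\ref{monomorphism} to every nonzero $\mu \in \ell_{F(\lambda)}^{\perp}$, and feed the result into Lemma~\ref{perpendicular} to conclude $\ell_\alpha = \ell_{F(\lambda)}$. You add welcome detail the paper omits (the local/cyclic argument for indecomposability and the explicit double-orthogonal step $(\ell_{F(\lambda)}^{\perp})^{\perp} = \ell_{F(\lambda)}$), but the strategy is identical.
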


\begin{proof}
Denote by $T$ the $A$-module $Au_{\lambda} \oplus Au_{\lambda}^{a-1}$. This is a periodic module of period $1$, and its support variety equals that of $Au_{\lambda}$ since $Au_{\lambda}$ and $Au_{\lambda}^{a-1}$ are syzygies of each other. Moreover, the module $Au_{\lambda}$ is indecomposable, hence its support variety is a single line. Let therefore $\alpha \in k^c$ be a nonzero point such that $\V_H (Au_{\lambda})= \ell_{\alpha}$. Furthermore, let $\mu \in k^c$ be a nonzero point such that $\ell_{\mu} \subseteq \ell_{F( \lambda )}^{\perp}$, and denote the element $\mu_1 \eta_1 + \cdots + \mu_c \eta_c \in H$ by $\zeta$. By the previous lemma, the space $\underline{\Hom}_A(T, K_{\zeta} \otimes_A k)$ is nonzero, and so from Lemma \ref{perpendicular} we obtain the inclusion $\ell_{\alpha} \subseteq \ell_{\mu}^{\perp}$. Therefore $\ell_{\alpha} = \ell_{F( \lambda )}$.
\end{proof}

Using Proposition \ref{stablemap} and Proposition \ref{periodicvariety}, we now prove our main result. It shows that the map $k^c \xrightarrow{F} k^c$ maps the rank variety of a module onto its support variety. 

\begin{theorem}\label{mainthm}
$F \left ( \V_A^r(M) \right ) = \V_H (M)$ for every $A$-module $M$.
\end{theorem}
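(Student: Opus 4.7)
The plan is to verify the two set-theoretic inclusions $F(\V_A^r(M)) \subseteq \V_H(M)$ and $\V_H(M) \subseteq F(\V_A^r(M))$ separately. Both varieties are homogeneous subsets of $k^c$, and on any line through the origin $F$ acts as the $a$-th power map, which is surjective onto its image line since $k$ is algebraically closed; it therefore suffices to argue line-by-line.

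For the forward inclusion let $\lambda \in k^c$ be nonzero with $\ell_\lambda \subseteq \V_A^r(M)$. Proposition \ref{stablemap} gives $\underline{\Hom}_A(Au_\lambda, M) \neq 0$, and since $Au_\lambda$ and $Au_\lambda^{a-1}$ are mutual syzygies the module $Au_\lambda$ is periodic of period dividing two. Consequently $\Ext_A^{2n}(Au_\lambda, M) \neq 0$ for every positive integer $n$, so $\Ext_A^\ast(Au_\lambda, M)$ is infinite dimensional over $k$ and its annihilator in $H$ contains no power of the irrelevant ideal, whence the variety $\V_H(Au_\lambda, M)$ has positive dimension. Since the two $H$-module structures on $\Ext_A^\ast(Au_\lambda, M)$ coming from $\varphi_{Au_\lambda}$ and $\varphi_M$ agree up to a graded sign, the standard containment $\V_H(Au_\lambda, M) \subseteq \V_H(Au_\lambda) \cap \V_H(M)$ holds, and by Proposition \ref{periodicvariety} the right-hand side equals $\ell_{F(\lambda)} \cap \V_H(M)$. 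A positive-dimensional homogeneous subvariety of the line $\ell_{F(\lambda)}$ is the whole line, so $\ell_{F(\lambda)} \subseteq \V_H(M)$.

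For the reverse inclusion let $\beta \in \V_H(M)$ be nonzero; extracting $a$-th roots produces $\lambda \in k^c$ with $F(\lambda) = \beta$, and the task reduces to showing $\underline{\Hom}_A(Au_\lambda, M) \neq 0$. The plan is induction on $d := \dim \V_H(M)$. For the inductive step $d \geq 2$, choose $\mu \in k^c$ with $\mu \cdot \beta = 0$ and $\ell_\mu^\perp \not\supseteq \V_H(M)$ (possible since $\dim \V_H(M) > 1$), put $\zeta = \sum \mu_i \eta_i \in H^2$, and form $K_\zeta \otimes_A M$. Its support variety equals $\ell_\mu^\perp \cap \V_H(M)$, which is strictly smaller than $\V_H(M)$ but still contains $\ell_\beta$, so induction supplies $\underline{\Hom}_A(Au_\lambda, K_\zeta \otimes_A M) \neq 0$. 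Tensoring the defining sequence $0 \to A \to K_\zeta \to \Omega_{A^{\e}}^1(A) \to 0$ of $K_\zeta$ on the right with $M$ yields a short exact sequence of $A$-modules whose right-hand term is stably isomorphic to $\Omega_A^1(M)$, and the associated long exact sequence in $\widehat{\Ext}_A^\ast(Au_\lambda, -)$, together with the period-two periodicity of $Au_\lambda$, forces either $\underline{\Hom}_A(Au_\lambda, M)$ or $\underline{\Hom}_A(Au_\lambda^{a-1}, M)$ to be nonzero; Proposition \ref{stablemap} then gives $\lambda \in \V_A^r(M)$.

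The base case $d = 1$, in which $\V_H(M) = \ell_\beta = \V_H(Au_\lambda)$ and $M$ is eventually periodic by the complexity-one criterion of Section \ref{sec2}, is the main obstacle. Here one wants the tensor-product-style equality $\V_H(Au_\lambda, M) = \V_H(Au_\lambda) \cap \V_H(M)$ for two modules with the same single-line support, and the plan is to exploit the embedding $Au_\lambda \hookrightarrow K_\zeta \otimes_A k$ supplied by Lemma \ref{monomorphism} (for a suitable $\mu$ perpendicular to $\beta$) combined with the bimodule adjunction of Lemma \ref{adjoint} applied to $K_\zeta$, transferring the question to one about stable morphisms into a module whose variety is controlled by the formula $\V_H(K_\zeta \otimes_A M) = \V_H(\zeta) \cap \V_H(M)$; the connectedness of the projective support variety of an indecomposable periodic module recalled in Section \ref{sec2} should then ensure that some indecomposable summand of a periodic syzygy of $M$ forces a nontrivial stable map from $Au_\lambda$.
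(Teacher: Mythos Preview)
Your forward inclusion matches the paper essentially verbatim, and your inductive step for $d\ge 2$ is close in spirit to the paper's (the paper uses the slightly cleaner observation that the short exact sequence $0 \to M \to K_\zeta \otimes_A M \to \Omega_A^1(M)\oplus P \to 0$ directly gives $\V_A^r(K_\zeta\otimes_A M)\subseteq \V_A^r(M)$, but your long-exact-sequence version works too, modulo the harmless slip that induction only hands you \emph{some} $\lambda'$ with $F(\lambda')=\beta$, not the $\lambda$ you fixed in advance).

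The genuine gap is the base case $d=1$. The plan you sketch there --- transferring via Lemma~\ref{adjoint} and the embedding of Lemma~\ref{monomorphism}, then invoking connectedness --- is not an argument, and I do not see how to make it into one: those lemmas were designed to pin down $\V_H(Au_\lambda)$, not to produce stable maps from $Au_\lambda$ into an arbitrary periodic $M$ with the same support line. More importantly, you are working much too hard. The paper's base case is a two-line argument that does \emph{not} attempt to show $\lambda\in\V_A^r(M)$ for a prescribed $\lambda$; instead it uses the already-established forward inclusion together with the fact that rank varieties detect projectivity (the analogue of Dade's lemma, proved in \cite{Benson}). Namely: assume $M$ indecomposable and non-projective with $\V_H(M)$ a single line $\ell_\beta$; since $M$ is not projective, $\V_A^r(M)\neq\{0\}$, hence $F(\V_A^r(M))\neq\{0\}$; but $F(\V_A^r(M))\subseteq\V_H(M)=\ell_\beta$, so $F(\V_A^r(M))=\ell_\beta$. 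The projectivity-detection property of $\V_A^r$ is exactly the missing ingredient in your proposal, and once you invoke it there is nothing left to do.
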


\begin{proof}
The result is clearly true if $M$ is projective. If $M$ is not projective, let $\lambda \in k^c$ be a nonzero point whose corresponding line $\ell_{\lambda}$ is contained in $\V_A^r(M)$. Then $\underline{\Hom}_A (Au_{\lambda},M)$ and $\underline{\Hom}_A (Au_{\lambda}^{a-1},M)$ are both nonzero by Proposition \ref{stablemap}, and so $\Ext_A^i(Au_{\lambda},M) \neq 0$ for $i>0$. This implies that $\V_H (Au_{\lambda},M)$ is nontrivial, and since this variety is contained in $\V_H (Au_{\lambda}) \cap \V_H(M)$, we see from  Proposition \ref{periodicvariety} that the line $\ell_{F( \lambda )}$ must be contained in $\V_H(M)$. Consequently, the inclusion $F \left ( \V_A^r(M) \right ) \subseteq \V_H (M)$ holds.

To prove the reverse inclusion, we argue by induction on the dimension of the support variety of $M$. We may assume that $M$ is indecomposable, since the variety (rank or support) of a direct sum is the union of the varieties of the summands. Suppose first that the dimension of $\V_H(M)$ is one, i.e.\ that $M$ is periodic. Then $\V_H(M)$ is a line, and we proved above that this variety contains $F \left ( \V_A^r(M) \right )$. If $F \left ( \V_A^r(M) \right )=0$, then $\V_A^r(M) =0$, which is not the case since $M$ is not projective. Hence $F \left ( \V_A^r(M) \right )$ is nontrivial, and so $F \left ( \V_A^r(M) \right ) = \V_H (M)$ in this case. Next, suppose that $\dim \V_H (M) >1$, and let $\mu \in k^c$ be a nonzero point such that $\dim \left ( \ell_{\mu}^{\perp} \cap \V_H (M) \right ) < \dim \V_H(M)$. Denote the element $\mu_1 \eta_1 + \cdots + \mu_c \eta_c \in H$ by $\zeta$, and let $\Omega_{A^{\e}}^2(A) \xrightarrow{f_{\zeta}} A$ be a bimodule map representing this element. Since the bottom exact sequence in the pushout diagram
$$\xymatrix{0 \ar[r] & \Omega_{A^{\e}}^2 (A) \ar[d]^{f_{\zeta}} \ar[r] & P_1 \ar[d] \ar[r] & \Omega_{A^{\e}}^1 (A) \ar@{=}[d] \ar[r] & 0 \\
0 \ar[r] & A \ar[r] & K_{\zeta} \ar[r] & \Omega_{A^{\e}}^1 (A) \ar[r] & 0}$$
splits as right $A$-modules, it stays exact after tensoring with $M$. This gives a short exact sequence
$$0 \to M \to K_{\zeta} \otimes_A M \to \Omega_A^1 (M) \oplus P \to 0,$$
in which $P$ is projective, and so 
$$\V_A^r (K_{\zeta} \otimes_A M) \subseteq \V_A^r(M) \cup \V_A^r ( \Omega_A^1 (M) \oplus P ) = \V_A^r(M).$$
Now since $\V_H (K_{\zeta} \otimes_A M) = \V_H ( \zeta ) \cap \V_H(M)= \ell_{\mu}^{\perp} \cap \V_H (M)$, induction gives $V_H(K_{\zeta} \otimes_A M) \subseteq F \left ( V_A^r(K_{\zeta} \otimes_A M) \right )$. 
Therefore $\ell_{\mu}^{\perp} \cap \V_H (M) \subseteq F( \V_A^r(M))$, and since this inclusion holds for every nonzero point $\mu \in k^c$ such that $\dim \left ( \ell_{\mu}^{\perp} \cap \V_H (M) \right )$ is strictly less than $\dim \V_H(M)$, we see that $\V_H(M) \subseteq F( \V_A^r(M))$.
\end{proof}

The Avrunin-Scott theorem follows immediately from this theorem; the rank variety of an $A$-module is isomorphic to its support variety.

\begin{corollary}\label{AvruninScott}
For every $A$-module $M$, the varieties $\V_A^r(M)$ and $\V_{\HH^{2*}(A)}(M)$ are isomorphic.
\end{corollary}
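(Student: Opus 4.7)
The plan is to obtain the corollary as an immediate consequence of Theorem \ref{mainthm} combined with the identification established at the end of Section \ref{sec2}.

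First, I would invoke the observation from Section \ref{sec2}: for every $A$-module $M$, the support variety $\V_H(M)$ taken with respect to the polynomial subalgebra $H = k[\eta_1, \dots, \eta_c]$ is already shown to be isomorphic to $\V_{\HH^{2*}(A)}(M)$. This reduces the task to producing an isomorphism $\V_A^r(M) \simeq \V_H(M)$.

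Next, I would appeal to Theorem \ref{mainthm}, which gives the equality $F\bigl(\V_A^r(M)\bigr) = \V_H(M)$, where $F \colon k^c \to k^c$ is the finite morphism of affine spaces sending $(\alpha_1, \dots, \alpha_c)$ to $(\alpha_1^a, \dots, \alpha_c^a)$. Restricting $F$ to the rank variety produces a surjective morphism onto the support variety; composing with the earlier identification then yields the desired isomorphism $\V_A^r(M) \simeq \V_{\HH^{2*}(A)}(M)$.

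The main point requiring care is verifying that $F|_{\V_A^r(M)}$ really does furnish an isomorphism of varieties, not merely a surjective morphism. This is the analog of the standard fact used in the classical Avrunin-Scott theorem for elementary abelian $p$-groups, where $F$ is (a power of) Frobenius and one exploits that Frobenius is a bijective morphism of algebraic sets over the algebraically closed field $k$. The same mechanism is what I expect to use in the present quantum setting, so that the corollary truly does follow ``immediately'' from the main theorem.
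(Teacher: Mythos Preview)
Your reduction is exactly what the paper does: it simply declares that the corollary follows immediately from Theorem~\ref{mainthm}, implicitly combining $F\bigl(\V_A^r(M)\bigr) = \V_H(M)$ with the identification $\V_H(M) \simeq \V_{\HH^{2*}(A)}(M)$ established at the end of Section~\ref{sec2}. No further argument is offered in the paper.

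You are right to isolate the passage from ``$F$ maps $\V_A^r(M)$ onto $\V_H(M)$'' to ``$F$ is an isomorphism'' as the delicate point, but the mechanism you propose does not carry over to this setting. In the classical elementary abelian case the relevant map is the Frobenius $\alpha \mapsto \alpha^{p}$ with $p = \Char k$, and this is bijective on the algebraically closed field $k$. Here the map is $\alpha \mapsto \alpha^{a}$ for an \emph{arbitrary} integer $a \ge 2$; unless $a$ is a power of $\Char k$ (in particular, never when $\Char k = 0$), this map is not injective on $k$, so $F$ is merely a finite surjective morphism with fibres of size up to $(a')^{c}$, not a bijection. Thus the Frobenius-style argument you anticipate cannot be invoked in general. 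The paper itself does not address this point: what Theorem~\ref{mainthm} delivers without further work is a finite surjection $\V_A^r(M) \twoheadrightarrow \V_H(M)$, which already suffices for the dimension statements in Corollaries~\ref{dimrankvar} and~\ref{periodic}. A literal isomorphism of varieties would require an additional ingredient beyond what either you or the paper supply.
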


Consequently, the dimension of the rank variety of an $A$-module is the complexity of the module. In particular, an indecomposable module is periodic if and only if its rank variety is of dimension one.

\begin{corollary}\label{dimrankvar}
For every $A$-module $M$, the dimension of $\V_A^r(M)$ is the complexity of $M$.
\end{corollary}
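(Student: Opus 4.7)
The plan is to deduce both assertions directly from Corollary \ref{AvruninScott} together with the general properties of support varieties recalled in Section \ref{sec2}.

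For the first statement, recall from the discussion in Section \ref{sec2} that whenever $H$ is Noetherian and $\Ext_\La^*(M,N)$ is finitely generated over $H$ for all modules $M,N$, the dimension of the support variety of a module equals its complexity. By \cite{BerghOppermann}, this finite generation hypothesis is satisfied for the quantum complete intersection $A = A_q^c$ with respect to $\HH^{2*}(A)$, so the dimension of $\V_{\HH^{2*}(A)}(M)$ is the complexity of $M$. Now Corollary \ref{AvruninScott} provides an isomorphism of varieties $\V_A^r(M) \simeq \V_{\HH^{2*}(A)}(M)$, and isomorphic varieties have the same dimension; this gives $\dim \V_A^r(M) = \cx M$.

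For the second statement, I would combine two facts. First, the general theory recalled in Section \ref{sec2} says that, under the finite generation hypothesis, a module has support variety of dimension one precisely when it is eventually periodic. Second, for an indecomposable non-projective module over a selfinjective algebra, eventual periodicity coincides with periodicity, because over a selfinjective algebra the syzygy of an indecomposable non-projective module is again indecomposable and non-projective. Since $A$ is selfinjective, the equivalence ``$M$ indecomposable and periodic'' $\Leftrightarrow$ ``$\dim \V_{\HH^{2*}(A)}(M) = 1$'' holds, and then the isomorphism of varieties from Corollary \ref{AvruninScott} transfers this to the rank variety.

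There is no real obstacle here; the substance of the result is already contained in Corollary \ref{AvruninScott} and the general framework, so the corollary should be essentially a one-line consequence, only requiring that one invoke the correct earlier statements about complexity and periodicity.
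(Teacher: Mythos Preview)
Your argument is correct and matches the paper's approach: the paper gives no explicit proof for this corollary, treating it as an immediate consequence of Corollary~\ref{AvruninScott} together with the fact, recalled in Section~\ref{sec2}, that under the finite generation hypothesis the dimension of the support variety equals the complexity. Your second paragraph, however, addresses Corollary~\ref{periodic} rather than Corollary~\ref{dimrankvar}; the stated corollary has only one assertion (dimension equals complexity), so that material is superfluous here and should be moved to the next corollary.
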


\begin{corollary}\label{periodic}
An indecomposable $A$-module $M$ is periodic if and only if the dimension of $\V_A^r(M)$ is one.
\end{corollary}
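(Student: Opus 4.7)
The plan is to reduce the statement to Corollary \ref{dimrankvar} and then invoke the general cohomological facts collected in Section \ref{sec2}. By Corollary \ref{dimrankvar}, the dimension of $\V_A^r(M)$ equals the complexity of $M$, so the claim is equivalent to asserting that an indecomposable $A$-module $M$ is periodic if and only if its complexity is one.

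For the forward direction, if $M$ is periodic then its minimal projective resolution is bounded, so the rate of growth of the Betti numbers is at most one; since $M$ is non-projective (a periodic module is non-projective by convention), the complexity is exactly one. For the converse, recall from Section \ref{sec2} that the quantum complete intersection $A$ satisfies the finite generation hypothesis (by the result of \cite{BerghOppermann} applied to the polynomial subalgebra $H$), so that modules of complexity one are precisely the eventually periodic modules, i.e.\ those whose minimal projective resolution becomes periodic from some step on.

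The one remaining point is to promote ``eventually periodic'' to ``periodic'' for an indecomposable module. This is where selfinjectivity of $A$ enters: the syzygy functor $\Omega_A$ induces an auto-equivalence of the stable module category, and in particular sends indecomposable non-projective modules to indecomposable non-projective modules. If $M$ is indecomposable and $\Omega_A^{n+p}(M) \simeq \Omega_A^n(M)$ in the stable category for some $n \geq 0$ and $p \geq 1$, applying $\Omega_A^{-n}$ on the stable level yields $\Omega_A^p(M) \simeq M$, so $M$ itself is periodic. This step is the only subtle point, but it is entirely standard for selfinjective algebras. Combining the three ingredients gives the corollary.
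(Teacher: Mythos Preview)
Your argument is correct and follows the same route the paper intends: the paper states Corollary~\ref{periodic} without an explicit proof, deriving it (via the sentence preceding the two corollaries) from Corollary~\ref{dimrankvar} together with the facts recorded in Section~\ref{sec2} about complexity, eventual periodicity, and selfinjectivity. Your write-up simply makes explicit the passage from ``eventually periodic'' to ``periodic'' via the stable auto-equivalence $\Omega_A$, which the paper leaves implicit.
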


\end{document}